\newtheorem{theorem}{Theorem}
\newenvironment{proof}[1][Proof]{\noindent\textbf{#1.} }{\ \rule{0.5em}{0.5em}}
\begin{document}

\title{Uniform Estimation Beyond the Mean}
\author{Andreas Maurer \\
Adalbertstr.55\\
D-80799 M\"{u}nchen\\
am@andreas-maurer.eu}
\maketitle

\begin{abstract}
Finite sample bounds on the estimation error of the mean by the empirical
mean, uniform over a class of functions, can often be conveniently obtained
in terms of Rademacher or Gaussian averages of the class. If a function of $n
$ variables has suitably bounded partial derivatives, it can be substituted
for the empirical mean, with uniform estimation again controlled by Gaussian
averages. Up to a constant the result recovers standard results for the
empirical mean and more recent ones about U-statistics, and extends to a
general class of estimation problems.
\end{abstract}

\section{Introduction}

Suppose we are given a class $\tciFourier $ of loss functions $f:\mathcal{%
X\rightarrow }\left[ 0,1\right] $, where $\mathcal{X}$ is some space, and a
vector of independent observations $\mathbf{X}=\left( X_{1},...,X_{n}\right) 
$, obeying some common law of probability $\mu $. The method of empirical
risk minimization seeks some $f\in \tciFourier $ which minimizes the
empirical average $\Phi \left( f\left( \mathbf{X}\right) \right) =\Phi
\left( f\left( X_{1}\right) ,...,f\left( X_{n}\right) \right) $, where%
\[
\Phi \left( s_{1},...,s_{n}\right) :=\frac{1}{n}\sum_{i=1}^{n}s_{i}\text{
for }s_{i}\in \left[ 0,1\right] .
\]%
The intuitive motivation of this method is the underlying hope that one
thereby approximately minimizes the expectation $\mathbb{E}_{\mathbf{X}%
^{\prime }}\Phi \left( f\left( \mathbf{X}^{\prime }\right) \right) =\mathbb{E%
}_{X\sim \mu }f\left( X\right) $ (where $\mathbf{X}^{\prime }$ is always iid
to $\mathbf{X}$). A fundamental problem in learning theory is the
justification of this hope in form of a uniform finite-sample bound of the
following type:

For every law $\mu $, every $n\in 
\mathbb{N}
$, and every $\delta >0$ there is a number $\mathcal{B}\left( \delta
,n\right) $ such that%
\begin{equation}
\Pr_{\mathbf{X}}\left\{ \sup_{f\in \tciFourier }\left( \mathbb{E}_{\mathbf{X}%
^{\prime }}\left[ \Phi \left( f\left( \mathbf{X}^{\prime }\right) \right) %
\right] -\Phi \left( f\left( \mathbf{X}\right) \right) \right) >\mathcal{B}%
\left( \delta ,n\right) \right\} <\delta .  \label{uniform law}
\end{equation}%
The bound $\mathcal{B}\left( \delta ,n\right) $ should depend little on the
confidence parameter $\delta $ and go to zero as $n\rightarrow \infty $.
This paper is motivated by the question under what conditions such bounds
can be found for other functions $\Phi $, beyond arithmetic means, such as
U-statistics or other, more general, nonlinear functions.\bigskip

One method to prove bounds of the form (\ref{uniform law}) above, which has
gained great popularity over the last decade and a half, is the method of
Rademacher and Gaussian averages (Kolchinskii 2000\nocite{Kolchinskii 2000},
Bartlett and Mendelson 2002\nocite{Bartlett 2002}). Given a subset $%
Y\subseteq 
\mathbb{R}
^{n}$ one defines 
\[
R\left( Y\right) =\mathbb{E}\sup_{\mathbf{y}\in Y}\sum_{i}\epsilon _{i}y_{i}%
\text{ and }G\left( Y\right) =\mathbb{E}\sup_{\mathbf{y}\in Y}\sum_{i}\gamma
_{i}y_{i}\text{,}
\]%
where the $\epsilon _{i}$ are independent uniform $\left\{ -1,1\right\} $%
-valued random variables and the $\gamma _{i}$ are independent standard
normal variables. The Rademacher averages $R\left( Y\right) $ and the
Gaussian averages $G\left( Y\right) $ are related by the inequalities $%
R\left( Y\right) \leq \sqrt{\pi /2}~G\left( Y\right) $ and $G\left( Y\right)
\leq 3\ln \left( n\right) R\left( Y\right) $ (see Ledoux and Talagrand 1991, 
\nocite{Ledoux 1991}). These quantities come into play as follows.

The random variable to bound is $\Psi \left( \mathbf{X}\right) =\sup_{f\in
\tciFourier }\left( \mathbb{E}\left[ \Phi \left( f\left( \mathbf{X}^{\prime
}\right) \right) \right] -\Phi \left( f\left( \mathbf{X}\right) \right)
\right) $. We write 
\[
\Psi \left( \mathbf{X}\right) =\mathbb{E}_{\mathbf{X}^{\prime }}\Psi \left( 
\mathbf{X}^{\prime }\right) +\left[ \Psi \left( \mathbf{X}\right) -\mathbb{E}%
_{\mathbf{X}^{\prime }}\Psi \left( \mathbf{X}^{\prime }\right) \right] .
\]%
The second term in this decomposition is the deviation of the random
variable $\Psi \left( \mathbf{X}\right) $ from its mean, and it can be
controlled using the well known bounded difference inequality (see McDiarmid
1998\nocite{McDiarmid 1998} or Boucheron et al 2013\nocite{Boucheron 2013},
Theorem \ref{Theorem bounded difference} below). The crucial property of the
arithmetic mean is that it changes little (here at most $1/n$) if only one
of its arguments is modified. The bounded difference inequality then gives a
bound of $\sqrt{\ln \left( 1/\delta \right) /\left( 2n\right) }$ with
probability at most $\delta $ for the second term. For the first term a
straightforward symmetrization argument gives the bound%
\[
\mathbb{E}_{\mathbf{X}}\Psi \left( \mathbf{X}\right) =\mathbb{E}_{\mathbf{X}%
}\sup_{f\in \tciFourier }\left( \mathbb{E}\left[ \Phi \left( f\left( \mathbf{%
X}^{\prime }\right) \right) \right] -\Phi \left( f\left( \mathbf{X}\right)
\right) \right) \leq \frac{2}{n}\mathbb{E}_{\mathbf{X}}\left[ R\left(
\tciFourier \left( \mathbf{X}\right) \right) \right] ,
\]%
where $\tciFourier \left( \mathbf{X}\right) =\left\{ f\left( \mathbf{X}%
\right) =\left( f\left( X_{1}\right) ,...,f\left( X_{n}\right) \right) :f\in
\tciFourier \right\} $ is a random subset of $%
\mathbb{R}
^{n}$. Since typically $R\left( \tciFourier \left( \mathbf{X}\right) \right) 
$ is of order $\sqrt{n}$ this term is also of order $1/\sqrt{n}$. Putting
the two bounds together gives (\ref{uniform law}) with 
\[
\mathcal{B}\left( \delta ,n\right) =\frac{2}{n}\mathbb{E}_{\mathbf{X}%
}R\left( \tciFourier \left( \mathbf{X}\right) \right) +\sqrt{\frac{\ln
\left( 1/\delta \right) }{2n}}.
\]%
Replacing the Rademacher average with the Gaussian average incurs only a
factor of $\sqrt{\pi /2}$. Both complexity measures have been very
successful, because they are often very easy to bound in practice.\bigskip

What properties of a general function $\Phi $ could guarantee similar
results? Clearly the same decomposition as above is always possible, and the
bounded difference inequality just requires that $\Phi $ changes only in the
order of $1/n$ if one of its arguments is modified. This concentration
property seems to be a very common-sense postulate, which we may retain as a
requirement for $\Phi $.

The difficulty still lies in the first term, because the usual
symmetrization argument relies heavily on the linearity of the arithmetic
mean. This suggests that we should get reasonable results if $\Phi $ is
'nearly' linear, in some sense of small curvature. The crucial requirement
is that the \textit{change} of $\Phi ,$ as one argument is changed, does not
depend too strongly on the \textit{other} arguments. We will formulate this
requirement in terms of mixed partial derivatives, which in (\ref{uniform
law}) will give us the bound%
\[
\mathcal{B}\left( \delta ,n\right) =c\left( L+M\right) \mathbb{E}_{\mathbf{X}%
}G\left( \tciFourier \left( \mathbf{X}\right) \right) +L\sqrt{n\ln \left(
1/\delta \right) /2},
\]%
where $c$ is a (unfortunately rather large) universal constant. Here the
bounded difference condition and our constraints on the mixed partial
derivatives of $\Phi $ are expressed in the quantities $L$ and $M$
respectively. For the arithmetic mean $L=1/n$ and $M=0$, so the price we pay
for the generality of $\Phi $ is the large constant and the presence of
Gaussian instead of the Rademacher average. This price is due to the use of
Talagrand's majorizing measure theorem, a powerful result, which was the
only working vehicle the author could find for the proof.

The first nontrivial cases are furnished by U-statistics, and we will see
that in this case $M$ and $L$ are of order $1/n$, so that we obtain bounds
of the same order as for the mean. It must at once be admitted that for
U-statistics such a result, with small constant and Rademacher instead of
Gaussian averages, has already been published by Clemencon et al (2008)%
\nocite{Clemencon Ustat}. Their method uses a trick introduced by Hoeffding
(1963)\nocite{Hoeffding 1963}, which reduces U-statistics to linear
functions. Nevertheless Hoeffding's method uses permutation arguments and
works only if the variables $X_{i}$ are identically distributed, while for
our method they only need to be independent. Besides this, U-statistics
possess a certain rigidity, while our result is applicable to a fairly large
class of functions $\Phi $. Generic members of this class have first partial
derivatives uniformly bounded in order of $1/n$ and mixed partial
derivatives uniformly bounded in order of $1/n^{2}$. These properties ensure 
$L$ and $M$ to be of order $1/n$.\bigskip

The next section introduces some necessary notation, states our main result
and sketches some applications. The last section is devoted to the proof of
our main result.

\section{Main results}

Before stating our result we introduce some notation: the letter $\mathcal{X}
$ always denotes some arbitrary set. If $F$ is a function on $\mathcal{X}^{n}
$ of $n$ variables, and $\mathbf{x}=\left( x_{1},...,x_{n}\right) \in 
\mathcal{X}^{n}$ we use $F_{k}\left( \mathbf{x},y\right) $ to denote $%
F\left( \mathbf{x}^{\prime }\right) $ where $x_{i}^{\prime }=x_{i}$ for $%
i\neq k$ and $x_{k}^{\prime }=y$. We use $\mathbf{e}_{1},...,\mathbf{e}_{n}$
to denote the canonical basis of $%
\mathbb{R}
^{n}$. If $F$ is a twice differentiable function of several real variables
then $\partial _{k}F$ is the partial derivative of $F$ w.r.t. the $k$-th
variable, and $\partial _{lk}F$ is the second partial derivative w.r.t. the $%
k$-th and $l$-th variable. For functions $F:\mathcal{X\rightarrow 
\mathbb{R}
}$ we write $\left\Vert F\right\Vert _{\infty }=\sup_{x\in \mathcal{X}%
}\left\vert f\left( x\right) \right\vert .$ The letter $c$ will always
denote a universal constant, which is allowed to be modified within proofs
from line to line in the standard way, so that, for example, $3c$ in one
line can become $c$ in the next line. If $\mathbf{X}$ is any random vector, $%
\mathbf{X}^{\prime }$ will always be iid to $\mathbf{X}$, which of course
does not mean that the components of $\mathbf{X}$ are iid.\bigskip 

\begin{theorem}
\label{Theorem Main} Let $\mathbf{X}=\left( X_{1},...,X_{n}\right) $ be a
vector of independent random variables with values in $\mathcal{X}$, $%
\mathbf{X}^{\prime }$ iid to $\mathbf{X}$, and let $\tciFourier $ be a
finite class of functions $f:\mathcal{X\rightarrow }\left[ 0,1\right] $ .
Assume $\Phi :%
\mathbb{R}
^{n}\rightarrow 
\mathbb{R}
$ to be twice differentiable, satisfying the conditions 
\begin{equation}
\forall k,\left\Vert \partial _{k}\Phi \right\Vert _{\infty }\leq L
\label{Coordinatewise Lipschitz}
\end{equation}%
and 
\begin{equation}
\sqrt{\sum_{k}\left\Vert \sum_{l:k\neq l}\left( \partial _{lk}\Phi \right)
^{2}\right\Vert _{\infty }}\leq M.  \label{Mixed deivative}
\end{equation}%
Then 
\begin{equation}
\mathbb{E}\sup_{f\in \tciFourier }\left[ \mathbb{E}\Phi \left( f\left( 
\mathbf{X}\right) \right) -\Phi \left( f\left( \mathbf{X}\right) \right) %
\right] \leq c\left( M+L\right) \mathbb{E}G\left( \tciFourier \left( \mathbf{%
X}\right) \right) .  \label{Main Inequality}
\end{equation}%
Furthermore, if $\delta >0$ then with probability at least $1-\delta $ in $%
\mathbf{X}$ it holds for all $f\in \tciFourier $ that%
\begin{equation}
\mathbb{E}\left[ \Phi \left( f\left( \mathbf{X}^{\prime }\right) \right) %
\right] \leq \Phi \left( f\left( \mathbf{X}\right) \right) +c\left(
L+M\right) \mathbb{E}_{\mathbf{X}}G\left( \tciFourier \left( \mathbf{X}%
\right) \right) +L\sqrt{\frac{n\ln \left( 1/\delta \right) }{2}/2}.
\label{generalization bound}
\end{equation}
\end{theorem}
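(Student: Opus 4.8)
The plan is to follow the two-term decomposition sketched in the introduction. Writing $\Psi \left( \mathbf{X}\right) =\sup_{f\in \tciFourier }\left( \mathbb{E}\Phi \left( f\left( \mathbf{X}^{\prime }\right) \right) -\Phi \left( f\left( \mathbf{X}\right) \right) \right) $, the tail bound (\ref{generalization bound}) will follow from (\ref{Main Inequality}) once $\Psi \left( \mathbf{X}\right) -\mathbb{E}\Psi \left( \mathbf{X}\right) $ is controlled. Since (\ref{Coordinatewise Lipschitz}) together with $f:\mathcal{X}\rightarrow \left[ 0,1\right] $ gives $\left\vert \Phi \left( f\left( \mathbf{x}\right) \right) -\Phi \left( f\left( \mathbf{x}^{\prime }\right) \right) \right\vert \leq L$ whenever $\mathbf{x},\mathbf{x}^{\prime }$ differ in one coordinate, $\Psi $ has bounded differences $L$ in each variable, and the bounded difference inequality (Theorem \ref{Theorem bounded difference}) yields $\Psi -\mathbb{E}\Psi \leq L\sqrt{n\ln \left( 1/\delta \right) /2}$ with probability $1-\delta $; combined with the defining property of the supremum this gives (\ref{generalization bound}). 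So the whole difficulty lies in the expectation bound (\ref{Main Inequality}).

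For (\ref{Main Inequality}) I first symmetrize. Jensen's inequality moves the ghost sample inside the supremum, $\mathbb{E}_{\mathbf{X}}\Psi \leq \mathbb{E}_{\mathbf{X},\mathbf{X}^{\prime }}\sup_{f}\left( \Phi \left( f\left( \mathbf{X}^{\prime }\right) \right) -\Phi \left( f\left( \mathbf{X}\right) \right) \right) $, and since each pair $\left( X_{i},X_{i}^{\prime }\right) $ is exchangeable I may insert independent signs $\epsilon _{i}$ that swap $X_{i}$ and $X_{i}^{\prime }$. With $a_{i}=\tfrac{1}{2}\left( f\left( X_{i}\right) +f\left( X_{i}^{\prime }\right) \right) $, $b_{i}=\tfrac{1}{2}\left( f\left( X_{i}^{\prime }\right) -f\left( X_{i}\right) \right) $ and the coordinatewise product $\epsilon \odot \mathbf{b}=\left( \epsilon _{i}b_{i}\right) _{i}$, the symmetrized summand is $Z_{f}^{\epsilon }=\Phi \left( \mathbf{a}+\epsilon \odot \mathbf{b}\right) -\Phi \left( \mathbf{a}-\epsilon \odot \mathbf{b}\right) $, and it remains to bound $\mathbb{E}_{\epsilon }\sup_{f}Z_{f}^{\epsilon }$ for fixed $\mathbf{X},\mathbf{X}^{\prime }$. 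The decisive structural fact is that the midpoint $\mathbf{a}$ does not depend on $\epsilon $, and that $\mathbb{E}_{\epsilon }Z_{f}^{\epsilon }=0$ by the $\epsilon \mapsto -\epsilon $ symmetry, so the chaining below may be run purely on increments.

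The heart of the proof is to show that the process $f\mapsto Z_{f}^{\epsilon }$ has sub-Gaussian increments in $\epsilon $, with $\left\Vert Z_{f}^{\epsilon }-Z_{g}^{\epsilon }\right\Vert _{\psi _{2}}\leq c\left( L+M\right) \rho \left( f,g\right) $ where $\rho \left( f,g\right) ^{2}=\left\Vert f\left( \mathbf{X}\right) -g\left( \mathbf{X}\right) \right\Vert ^{2}+\left\Vert f\left( \mathbf{X}^{\prime }\right) -g\left( \mathbf{X}^{\prime }\right) \right\Vert ^{2}$. I would obtain this from the path representation
\[
Z_{f}^{\epsilon }-Z_{g}^{\epsilon }=\int_{0}^{1}\sum_{j}\left( \delta _{j}^{+}\,\partial _{j}\Phi \left( \mathbf{u}\left( r\right) \right) -\delta _{j}^{-}\,\partial _{j}\Phi \left( \mathbf{w}\left( r\right) \right) \right) dr,
\]
where $\mathbf{u}\left( r\right) =\mathbf{a}\left( r\right) +\epsilon \odot \mathbf{b}\left( r\right) $ and $\mathbf{w}\left( r\right) =\mathbf{a}\left( r\right) -\epsilon \odot \mathbf{b}\left( r\right) $ interpolate from the $g$-configuration ($r=0$) to the $f$-configuration ($r=1$), the $\delta _{j}^{\pm }$ are values of $f-g$ at $X_{j}$ or $X_{j}^{\prime }$ and hence of size $\left\vert f-g\right\vert $, and — crucially — $\mathbf{a}\left( r\right) =\tfrac{1}{2}\left( \mathbf{u}\left( r\right) +\mathbf{w}\left( r\right) \right) $ is independent of $\epsilon $. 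For each $j$ I freeze the coordinates $l\neq j$ of the arguments of $\partial _{j}\Phi $ at this $\epsilon $-independent midpoint. This splits the increment into a main term whose $j$-th summand is an \emph{odd function of} $\epsilon _{j}$ \emph{alone}, contributing by bounded differences a variance proxy of order $L^{2}\rho \left( f,g\right) ^{2}$ using only (\ref{Coordinatewise Lipschitz}); and a remainder in which only coordinates $l\neq j$ have been moved, so that it is expressed through the \emph{off-diagonal} mixed derivatives $\partial _{lj}\Phi $ and, by Cauchy--Schwarz against (\ref{Mixed deivative}), contributes a variance proxy of order $M^{2}\rho \left( f,g\right) ^{2}$.

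I expect the decisive obstacle to be exactly this decomposition. A naive Taylor or mean-value expansion of $\partial _{j}\Phi \left( \mathbf{u}\right) -\partial _{j}\Phi \left( \mathbf{w}\right) $ produces the \emph{diagonal} term $\partial _{jj}\Phi $, which the hypotheses do not control and which is multiplied by a factor of size one, not $\left\vert f-g\right\vert $. Freezing the $l\neq j$ coordinates at the $\epsilon $-independent midpoint is what removes it: coordinate $j$'s own variation is absorbed into the odd-in-$\epsilon _{j}$ main term and bounded by $L$, leaving only the off-diagonal second derivatives, which are exactly the quantities appearing in (\ref{Mixed deivative}). Once the increment bound holds, the generic chaining upper bound gives $\mathbb{E}_{\epsilon }\sup_{f}Z_{f}^{\epsilon }\leq c\left( L+M\right) \gamma _{2}\left( \tciFourier ,\rho \right) $, and Talagrand's majorizing measure theorem bounds the $\gamma _{2}$-functional of the Euclidean metrics $\left\Vert f\left( \mathbf{X}\right) -g\left( \mathbf{X}\right) \right\Vert $ and $\left\Vert f\left( \mathbf{X}^{\prime }\right) -g\left( \mathbf{X}^{\prime }\right) \right\Vert $ by the corresponding Gaussian averages $G\left( \tciFourier \left( \mathbf{X}\right) \right) $ and $G\left( \tciFourier \left( \mathbf{X}^{\prime }\right) \right) $. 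Taking expectations over $\mathbf{X},\mathbf{X}^{\prime }$ and using that they are identically distributed collapses both into $\mathbb{E}G\left( \tciFourier \left( \mathbf{X}\right) \right) $, yielding (\ref{Main Inequality}). A secondary point I would have to verify is that the evaluation-point shifts inside the remainder contribute only higher-order terms that remain within the $M$-budget.
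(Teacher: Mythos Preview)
Your overall architecture matches the paper's: symmetrize with a ghost sample and random coordinate swaps, condition on the data, show that the resulting process has sub-Gaussian increments in the metric $\rho$ via the bounded difference inequality applied in the swap variables, invoke Talagrand's majorizing measure theorem (Theorem~\ref{Theorem generic chaining}) to pass to Gaussian averages, and finish with bounded differences for the concentration term. You also correctly identify the diagonal entry $\partial_{jj}\Phi$ as the obstruction to a naive expansion. The gap is in the remainder after your ``freezing at the $\epsilon$-midpoint''. The displacement you must undo in the coordinates $l\neq j$ is $u_l-a_l=\epsilon_l b_l$, which is of size $O(1)$ (since $|b_l|\leq 1/2$), \emph{not} of size $|f-g|$. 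Hence the $j$-th remainder summand carries a factor $\sum_{l\neq j}\partial_{lj}\Phi\cdot\epsilon_l b_l$, and when you flip $\epsilon_k$ the $j=k$ summand changes by something of order $|f-g|_k\cdot\bigl(\sum_{l\neq k}(\partial_{lk}\Phi)^2\bigr)^{1/2}\cdot\bigl(\sum_l b_l^2\bigr)^{1/2}$; the last factor is of order $\sqrt{n}$, not $\rho(f,g)$. Summing squares over $k$ yields a variance proxy $nM^2\rho(f,g)^2$ instead of $M^2\rho(f,g)^2$, so the bound you actually obtain is $c(L+\sqrt{n}\,M)\,\mathbb{E}G(\tciFourier(\mathbf{X}))$, which is vacuous in the intended regime $M\sim 1/n$, $G\sim\sqrt{n}$. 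The ``secondary point'' you flag at the end is therefore not secondary.

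The paper's decomposition is organised differently and avoids this. Rather than interpolating in $r$ and freezing at an $\epsilon$-midpoint, it computes the discrete bounded difference $Z_k(\boldsymbol{\sigma},1)-Z_k(\boldsymbol{\sigma},0)$ of $Z=Y_f-Y_g$ directly (eight $\Phi_k$-terms), and then adds and subtracts four auxiliary terms in which only the \emph{function} is switched from $f$ to $g$ in the off-$k$ coordinates while the swap pattern $\boldsymbol{\sigma}$ is held fixed. This yields four first-order pieces of the form $\Phi_k(B,f(x_k))-\Phi_k(B,g(x_k))$, each bounded by $L|f-g|$, plus two double differences $\bigl[\Phi_k(A,\alpha)-\Phi_k(A,\beta)\bigr]-\bigl[\Phi_k(B,\alpha)-\Phi_k(B,\beta)\bigr]$ in which $|\alpha-\beta|\leq 1$ but, crucially, $|A_l-B_l|$ is a value of $|f-g|$ for every $l\neq k$. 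Cauchy--Schwarz against (\ref{Mixed deivative}) then produces the factor $d(f,g)$ rather than $\sqrt{n}$, and summing over $k$ gives exactly $M^2 d(f,g)^2$. In short, the second-order correction must arise from comparing $f$ with $g$ in the other coordinates, not from undoing the swap; your midpoint freezing places the $O(|f-g|)$ factor on the single index $j$ and the $O(1)$ displacement on the $n-1$ indices $l\neq j$, which is the wrong way round.
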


Remarks:

1. Clearly condition (\ref{Mixed deivative}) is satisfied trivially with $M=0
$ for linear $\Phi $. In general, to have bounds of order $1/\sqrt{n}$ we
want both $M$ and $L$ to be of order $1/n$. This is guaranteed if the first
partial derivatives are of order $1/n$, and the mixed second partial
derivatives are order $1/n^{2}$.

2. Condition (\ref{Coordinatewise Lipschitz}) is what we need for the
application of the bounded difference inequality, and it will give us the
last term in the generalization bound (\ref{generalization bound}).

3. The condition (\ref{Mixed deivative}) is always satisfied if%
\[
\sqrt{\sum_{k,l:k\neq l}\left\Vert \partial _{lk}\Phi \right\Vert _{\infty
}^{2}}\leq M,
\]%
which is easier to verify. It may be that with a more careful analysis the
condition (\ref{Mixed deivative}) can be further relaxed to 
\[
\sqrt{\left\Vert \sum_{k,l:k\neq l}\left( \partial _{lk}\Phi \right)
^{2}\right\Vert _{\infty }}\leq M.
\]

4. It is evident from the proof, that the differentiability assumption can
be removed, if condition (\ref{Coordinatewise Lipschitz}) is replaced by the
requirement that $\Phi $ be $L$-Lipschitz in each coordinate separately, and
condition (\ref{Mixed deivative}) takes the form of a second order Lipschitz
condition. The statement of the latter condition however appears somewhat
cumbersome, so that here twice differentiability has been assumed for
greater clarity.

5. Other candidates for conditions on $\Phi $ come to mind, which would
allow similar results. A simple one is the requirement that $\Phi $ be a
Lipschitz function with respect to the euclidean distance on $%
\mathbb{R}
^{n}$. Unfortunately the Lipschitz constant of the arithmetic mean is
already $1/\sqrt{n}$, so with Rademacher or Gaussian averages being of order 
$\sqrt{n}$ no useful bounds result, not even in the simplest case.\bigskip 

We conclude this section with some simple examples. First consider the
sample variance given on $\left[ 0,1\right] ^{n}$ by%
\[
\Phi \left( \mathbf{s}\right) =\frac{1}{n\left( n-1\right) }\sum_{i<j}\left(
s_{i}-s_{j}\right) ^{2}.
\]%
Then 
\[
\partial _{k}\Phi \left( \mathbf{s}\right) =\frac{2}{n\left( n-1\right) }%
\sum_{i:i\neq k}\left( s_{k}-s_{i}\right) 
\]%
and for $l\neq k$%
\[
\partial _{lk}\Phi \left( \mathbf{s}\right) =\frac{-2}{n\left( n-1\right) },
\]%
from which we obtain $L=2/n$ and $M=2/\sqrt{n\left( n-1\right) }\leq
2/\left( n-1\right) $. The sample variance is a second order U-statistic
with kernel $\kappa \left( s,s^{\prime }\right) =\left( s-s^{\prime }\right)
^{2}/2$. 

Now consider the general U-statistic of $m$-th order%
\[
\Phi \left( \mathbf{s}\right) =\frac{1}{\binom{n}{m}}\sum_{i_{1}<\cdots
<i_{m}}\kappa \left( s_{i_{1}},...,s_{i_{m}}\right) ,
\]%
where $\kappa :\left[ 0,1\right] ^{m}$ is a symmetric, twice differentiable
kernel of $m$ variables. Then for $k\in \left\{ 1,...,n\right\} $%
\[
\left\vert \partial _{k}\Phi \left( \mathbf{s}\right) \right\vert \leq \frac{%
1}{\binom{n}{m}}\sum_{i_{1}<\cdots <i_{m}:k\in \left\{
i_{1},...,i_{j}\right\} }\left\vert \partial _{k}\kappa \left(
s_{i_{1}},...,s_{i_{m}}\right) \right\vert \leq \frac{m}{n}\left\Vert
\partial _{1}\kappa \right\Vert _{\infty },
\]%
and similarly for $l\neq k$%
\[
\left\vert \partial _{lk}\Phi \left( \mathbf{s}\right) \right\vert \leq 
\frac{m\left( m-1\right) }{n\left( n-1\right) }\left\Vert \partial
_{12}\kappa \right\Vert _{\infty },
\]%
so that $L$ and $M$ are again of order $1/n$. 

An example which is not a U-statistic and of practical relevance to learning
theory is constructed as follows. Let $\mu _{1},...,\mu _{K}$ be
distributions on $\mathcal{X}$ representing different classes of objects.
From each of the $\mu _{k}$ we draw an iid sample and let $\mathbf{X}$ be
the concatenation of these samples, where $\mathbf{X}$ has $n$ elements.
Observe that the $X_{i}$ and $X_{j}$ are not identically distributed. For $%
i,j\in \left\{ 1,...,n\right\} $ define $r_{ij}=1$ if $X_{i}$ and $X_{j}$
are drawn from the same distribution and $r_{ij}=-1$ if $X_{i}$ and $X_{j}$
are drawn from different distributions. Let $\tciFourier $ consist of
functions $f:\mathcal{X\rightarrow }\left[ 0,1\right] $. We seek a function $%
f\in \tciFourier $ which balances inter-class separation against intra-class
proximity. An obvious candidate is the functional $\mathbb{E}\Phi \left(
f\left( \mathbf{X}\right) \right) $ with%
\[
\Phi \left( \mathbf{s}\right) =\frac{1}{n\left( n-1\right) }%
\sum_{i<j}r_{ij}\left( s_{i}-s_{j}\right) ^{2}.
\]%
Except for the $r_{ij}$ this resembles the sample variance above, and it is
immediate that we obtain the same bounds for $M$ and $L$. On the other hand $%
\Phi $ is not permutation-symmetric nor are the $X_{i}$ identically
distributed.

\section{The proof}

We need two important auxiliary results. The first is the well known bounded
difference inequality, which goes back to Hoeffding (1963)\nocite{Hoeffding
1963} (see also McDiarmid 1998\nocite{McDiarmid 1998} and Boucheron et al
2013\nocite{Boucheron 2013}). Please recall the notation introduced at the
beginning of the previous section.

\begin{theorem}
\label{Theorem bounded difference}Suppose $F:\mathcal{X}^{n}\rightarrow 
\mathbb{R}
$ and $\mathbf{X}=\left( X_{1},...,X_{n}\right) $ is a vector of independent
random variables with values in $\mathcal{X}$, $\mathbf{X}^{\prime }$ is iid
to $\mathbf{X}$. Then 
\[
\Pr \left\{ F\left( \mathbf{X}\right) -\mathbb{E}F\left( \mathbf{X}^{\prime
}\right) >t\right\} \leq \exp \left( \frac{-2t^{2}}{\left\Vert \Delta
^{2}\right\Vert _{\infty }}\right) ,
\]%
where 
\[
\Delta ^{2}\left( \mathbf{x}\right) =\sum_{k=1}^{n}\sup_{y,z\in \mathcal{X}%
}\left( F_{k}\left( \mathbf{x},y\right) -F_{k}\left( \mathbf{x},z\right)
\right) ^{2}.
\]%
\bigskip 
\end{theorem}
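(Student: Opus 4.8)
The statement is the bounded difference (McDiarmid) inequality, and the plan is to prove it by the classical Doob martingale method combined with Hoeffding's lemma and the Chernoff bound. First I would note that, since $\mathbf{X}^{\prime }$ is iid to $\mathbf{X}$, we have $\mathbb{E}F\left( \mathbf{X}^{\prime }\right) =\mathbb{E}F\left( \mathbf{X}\right) $, so it suffices to bound the upper tail of $F\left( \mathbf{X}\right) -\mathbb{E}F\left( \mathbf{X}\right) $. Writing $\mathcal{A}_{k}=\sigma \left( X_{1},\dots ,X_{k}\right) $ I would introduce the Doob martingale $V_{k}=\mathbb{E}\left[ F\left( \mathbf{X}\right) \mid \mathcal{A}_{k}\right] $, so that $V_{0}=\mathbb{E}F\left( \mathbf{X}\right) $, $V_{n}=F\left( \mathbf{X}\right) $, and the martingale differences $D_{k}=V_{k}-V_{k-1}$ telescope to $\sum_{k=1}^{n}D_{k}=F\left( \mathbf{X}\right) -\mathbb{E}F\left( \mathbf{X}\right) $. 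The whole argument then reduces to controlling the moment generating function $\mathbb{E}\exp \left( \lambda \sum_{k}D_{k}\right) $ and feeding the result into the Chernoff bound $\Pr \left\{ F\left( \mathbf{X}\right) -\mathbb{E}F\left( \mathbf{X}\right) >t\right\} \leq e^{-\lambda t}\,\mathbb{E}\exp \left( \lambda \sum_{k}D_{k}\right) $.

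The key local estimate concerns the conditional range of each $D_{k}$. By independence, conditionally on $\mathcal{A}_{k-1}$ the variable $D_{k}$ is a function of $X_{k}$ alone with conditional mean zero. Writing $V_{k}$ explicitly as $V_{k}\left( x_{1},\dots ,x_{k}\right) =\mathbb{E}_{X_{k+1},\dots ,X_{n}}F\left( x_{1},\dots ,x_{k},X_{k+1},\dots ,X_{n}\right) $, for two values $u,v$ of the $k$-th argument Jensen's inequality gives $\left\vert V_{k}\left( \dots ,u\right) -V_{k}\left( \dots ,v\right) \right\vert \leq \mathbb{E}_{X_{>k}}\left\vert F_{k}\left( \mathbf{x},u\right) -F_{k}\left( \mathbf{x},v\right) \right\vert \leq \mathbb{E}_{X_{>k}}\sup_{y,z}\left\vert F_{k}\left( \mathbf{x},y\right) -F_{k}\left( \mathbf{x},z\right) \right\vert $, where the vector inside $F_{k}$ has its first $k-1$ slots fixed and the rest supplied by $X_{>k}$. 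Hence the conditional range $\rho _{k}$ of $D_{k}$, an $\mathcal{A}_{k-1}$-measurable quantity, is bounded by this expected supremum, and Hoeffding's lemma applied to the conditionally centred and conditionally bounded $D_{k}$ yields $\mathbb{E}\left[ e^{\lambda D_{k}}\mid \mathcal{A}_{k-1}\right] \leq \exp \left( \lambda ^{2}\rho _{k}^{2}/8\right) $.

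Finally I would aggregate these conditional bounds. Because $\rho _{k}$ is $\mathcal{A}_{k-1}$-measurable, the process $Z_{k}=\exp \left( \lambda \sum_{j\leq k}D_{j}-\tfrac{\lambda ^{2}}{8}\sum_{j\leq k}\rho _{j}^{2}\right) $ is a supermartingale with $\mathbb{E}Z_{0}=1$, so iterated conditioning gives $\mathbb{E}\exp \left( \lambda \sum_{k}D_{k}\right) \leq \exp \left( \tfrac{\lambda ^{2}}{8}\sum_{k}\rho _{k}^{2}\right) $ provided the sum of squared ranges is controlled. Squaring the range bound and invoking Jensen, $\rho _{k}^{2}\leq \mathbb{E}_{X_{>k}}\sup_{y,z}\left( F_{k}\left( \mathbf{x},y\right) -F_{k}\left( \mathbf{x},z\right) \right) ^{2}$, together with the definition of $\Delta ^{2}$, is designed to yield $\sum_{k}\rho _{k}^{2}\leq \left\Vert \Delta ^{2}\right\Vert _{\infty }$ almost surely; substituting this into the Chernoff bound and optimizing over $\lambda $ (the minimizer is $\lambda =4t/\left\Vert \Delta ^{2}\right\Vert _{\infty }$) produces exactly $\exp \left( -2t^{2}/\left\Vert \Delta ^{2}\right\Vert _{\infty }\right) $. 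The main obstacle is precisely this last almost-sure aggregation: a naive argument replaces each $\rho _{k}$ by its uniform bound $\sup_{\mathbf{x}}\sup_{y,z}\left\vert F_{k}\left( \mathbf{x},y\right) -F_{k}\left( \mathbf{x},z\right) \right\vert $ and delivers only the weaker denominator $\sum_{k}\sup_{\mathbf{x}}\sup_{y,z}\left( \cdots \right) ^{2}$, whereas the sharper quantity $\left\Vert \Delta ^{2}\right\Vert _{\infty }=\sup_{\mathbf{x}}\sum_{k}\sup_{y,z}\left( \cdots \right) ^{2}$ requires keeping the conditional ranges inside the expectation until the very end. In the application to Theorem \ref{Theorem Main} this subtlety is immaterial, since condition (\ref{Coordinatewise Lipschitz}) forces every coordinate range to be at most $L$ and all these quantities collapse to $nL^{2}$.
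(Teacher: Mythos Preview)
The paper does not give its own proof of Theorem~\ref{Theorem bounded difference}; it is quoted as a known result with references to Hoeffding, McDiarmid and Boucheron--Lugosi--Massart. Your Doob-martingale / Hoeffding-lemma / Chernoff argument is precisely the classical proof found in those references, so in that sense the approaches coincide.

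You correctly flag the one genuine subtlety: the denominator stated in the paper is $\left\Vert \Delta ^{2}\right\Vert _{\infty }=\sup_{\mathbf{x}}\sum_{k}\sup_{y,z}\left( F_{k}\left( \mathbf{x},y\right) -F_{k}\left( \mathbf{x},z\right) \right) ^{2}$, whereas the naive step of replacing each conditional range $\rho _{k}$ by its uniform supremum yields only the possibly larger $\sum_{k}\sup_{\mathbf{x}}\sup_{y,z}\left( \cdots \right) ^{2}$. Your argument, as written, indeed establishes only this weaker form; the step ``$\sum_{k}\rho _{k}^{2}\leq \left\Vert \Delta ^{2}\right\Vert _{\infty }$ almost surely'' does not follow from your Jensen bound on each $\rho _{k}^{2}$ individually, because the expectations $\mathbb{E}_{X_{>k}}$ are over different sets of variables for different $k$ and cannot be combined into a single pointwise bound on $\Delta ^{2}$. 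So the gap you identify is real for the sharp form as stated.

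Your final remark is also exactly right: in both places where Theorem~\ref{Theorem bounded difference} is invoked in the proof of Theorem~\ref{Theorem Main}, the per-coordinate differences are bounded by quantities independent of the remaining coordinates (by $L$ for $\Psi \left( \mathbf{X}\right) $, and by the uniform estimates leading to $16\left( M^{2}+L^{2}\right) d\left( f,g\right) ^{2}$ for $Z\left( \boldsymbol{\sigma }\right) $), so $\sup_{\mathbf{x}}\sum_{k}$ and $\sum_{k}\sup_{\mathbf{x}}$ coincide there and the weaker McDiarmid inequality you actually prove is all that is needed.
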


The second auxiliary result is due to Michel Talagrand (see Theorem 15 in
Talagrand 1987\nocite{Talagrand 1987} or Theorem 2.1.5 in Talagrand 2005%
\nocite{Talagrand 2005}). It is a consequence of the celebrated majorizing
measure theorem (see e.g. Talagrand 1992\nocite{Talagrand 1992}). The
version we state is proved in (Maurer 2014\nocite{Maurer 2014}), adapted to
zero mean processes and $K=1$.

\begin{theorem}
\label{Theorem generic chaining}Let $X_{\mathbf{t}}$ be a random process
with zero mean, indexed by a finite set $T\subset 
\mathbb{R}
^{n}$. Suppose that for any distinct members $\mathbf{t},\mathbf{t}^{\prime
}\in Y$ and any $s>0$ 
\begin{equation}
\Pr \left\{ X_{\mathbf{t}}-X_{\mathbf{t}^{\prime }}>s\right\} \leq \exp
\left( \frac{-s^{2}}{2\left\Vert \mathbf{t}-\mathbf{t}^{\prime }\right\Vert
^{2}}\right)   \label{Process Tailbound}
\end{equation}%
Then 
\[
\mathbb{E}\sup_{\mathbf{t}\in T}X_{\mathbf{t}}\leq c~G\left( T\right) 
\]%
where $c$ is a universal constant.\bigskip 
\end{theorem}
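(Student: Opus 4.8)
The plan is to treat the two assertions separately, deriving the deviation bound (\ref{generalization bound}) from the expectation bound (\ref{Main Inequality}) and reserving the real work for the latter. Write $\Psi(\mathbf{X})=\sup_{f\in\tciFourier}(\mathbb{E}\Phi(f(\mathbf{X}^{\prime}))-\Phi(f(\mathbf{X})))$ and decompose $\Psi=\mathbb{E}\Psi+(\Psi-\mathbb{E}\Psi)$. Since every $f$ takes values in $[0,1]$ and $\Vert\partial_{k}\Phi\Vert_{\infty}\leq L$ by (\ref{Coordinatewise Lipschitz}), replacing a single argument $X_{k}$ alters $\Phi(f(\mathbf{X}))$, and hence $\Psi$, by at most $L$; thus the quantity $\Delta^{2}$ of Theorem \ref{Theorem bounded difference} satisfies $\Vert\Delta^{2}\Vert_{\infty}\leq nL^{2}$, and that theorem controls $\Psi-\mathbb{E}\Psi$ by the last summand $L\sqrt{n\ln(1/\delta)/2}$ of (\ref{generalization bound}). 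Combined with (\ref{Main Inequality}) applied to $\mathbb{E}\Psi$ this yields (\ref{generalization bound}), so it remains to prove (\ref{Main Inequality}).

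For (\ref{Main Inequality}) I would first symmetrize with a ghost sample: convexity of the supremum and Jensen's inequality give $\mathbb{E}_{\mathbf{X}}\Psi\leq\mathbb{E}_{\mathbf{X},\mathbf{X}^{\prime}}\sup_{f}(\Phi(f(\mathbf{X}^{\prime}))-\Phi(f(\mathbf{X})))$. Because $X_{k}$ and $X_{k}^{\prime}$ are i.i.d., exchanging them in the index set picked out by signs $\epsilon_{k}\in\{-1,1\}$ preserves the joint law, so for each $\epsilon$ the right-hand side equals $\mathbb{E}_{\mathbf{X},\mathbf{X}^{\prime}}\sup_{f}g_{\epsilon}(f)$, where $g_{\epsilon}(f)=\Phi(f(\mathbf{X}^{\prime\epsilon}))-\Phi(f(\mathbf{X}^{\epsilon}))$ and $\mathbf{X}^{\epsilon},\mathbf{X}^{\prime\epsilon}$ swap their $k$-th coordinates exactly when $\epsilon_{k}=-1$. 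Averaging over $\epsilon$ gives $\mathbb{E}_{\mathbf{X}}\Psi\leq\mathbb{E}_{\mathbf{X},\mathbf{X}^{\prime},\epsilon}\sup_{f}g_{\epsilon}(f)$. The decisive feature of this construction is that, conditionally on $(\mathbf{X},\mathbf{X}^{\prime})$, the map $\epsilon\mapsto-\epsilon$ sends $g_{\epsilon}(f)$ to $-g_{\epsilon}(f)$, so $f\mapsto g_{\epsilon}(f)$ is a mean-zero process in the $\epsilon$-randomness indexed by the now-fixed finite set, which is exactly the setting of Theorem \ref{Theorem generic chaining}.

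The core is the conditional increment estimate. Fixing $(\mathbf{X},\mathbf{X}^{\prime})$ and writing $\mathbf{w}_{f}=f(\mathbf{X}^{\prime})-f(\mathbf{X})\in\mathbb{R}^{n}$, I would show that $g_{\epsilon}(f)-g_{\epsilon}(f^{\prime})$ is subgaussian in $\epsilon$ with variance proxy at most $c(L+M)^{2}\Vert\mathbf{w}_{f}-\mathbf{w}_{f^{\prime}}\Vert^{2}$. Flipping one $\epsilon_{k}$ produces a single-coordinate change in each of the four evaluations of $\Phi$. To first order the diagonal contribution is the Rademacher sum $\sum_{k}\epsilon_{k}(\partial_{k}\Phi)((f-f^{\prime})(X_{k}^{\prime})-(f-f^{\prime})(X_{k}))$, whose coordinatewise flips are bounded through (\ref{Coordinatewise Lipschitz}) and, by Theorem \ref{Theorem bounded difference} applied to the signs, contribute $L^{2}\Vert\mathbf{w}_{f}-\mathbf{w}_{f^{\prime}}\Vert^{2}$. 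The remainder records how the coordinate-$k$ change depends on the other coordinates, which is precisely governed by the mixed partials; bounding these cross terms by (\ref{Mixed deivative}) together with Cauchy--Schwarz contributes the $M^{2}\Vert\mathbf{w}_{f}-\mathbf{w}_{f^{\prime}}\Vert^{2}$ part. With this variance proxy, Theorem \ref{Theorem generic chaining} applied to the index points $c(L+M)\mathbf{w}_{f}$ yields $\mathbb{E}_{\epsilon}\sup_{f}g_{\epsilon}(f)\leq c(L+M)G(W)$ with $W=\{\mathbf{w}_{f}:f\in\tciFourier\}$.

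Finally I would discard the ghost sample. Splitting the Gaussian supremum, $G(W)=\mathbb{E}_{\gamma}\sup_{f}\sum_{k}\gamma_{k}(f(X_{k}^{\prime})-f(X_{k}))\leq\mathbb{E}_{\gamma}\sup_{f}\sum_{k}\gamma_{k}f(X_{k}^{\prime})+\mathbb{E}_{\gamma}\sup_{f}\sum_{k}(-\gamma_{k})f(X_{k})$, and taking $\mathbb{E}_{\mathbf{X},\mathbf{X}^{\prime}}$ while using $\mathbf{X}^{\prime}\sim\mathbf{X}$ and $-\gamma\sim\gamma$, each term becomes $\mathbb{E}_{\mathbf{X}}G(\tciFourier(\mathbf{X}))$, so $\mathbb{E}_{\mathbf{X},\mathbf{X}^{\prime}}G(W)\leq2\mathbb{E}_{\mathbf{X}}G(\tciFourier(\mathbf{X}))$, which closes (\ref{Main Inequality}). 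The step I expect to be the main obstacle is the conditional increment bound of the previous paragraph: the first-order term is a transparent Rademacher sum, but showing that the second-order remainder produced by the coupling of coordinates under a sign flip is again subgaussian with respect to the \emph{same} Euclidean gauge $\Vert\mathbf{w}_{f}-\mathbf{w}_{f^{\prime}}\Vert$ --- so that a single application of Theorem \ref{Theorem generic chaining} suffices and no larger, non-Euclidean metric intrudes --- is where a careful second-order expansion of $\Phi$ along the swaps and the exact form of condition (\ref{Mixed deivative}) are indispensable.
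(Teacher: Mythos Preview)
Your proposal is not a proof of the stated theorem. The statement you were asked to prove is Theorem~\ref{Theorem generic chaining}, Talagrand's generic-chaining bound for subgaussian processes, which the paper does \emph{not} prove at all: it is quoted as an auxiliary result from Talagrand (1987, 2005) and Maurer (2014). What you have written is instead an outline of a proof of Theorem~\ref{Theorem Main}, which \emph{uses} Theorem~\ref{Theorem generic chaining} as a black box. So at the level of the assignment there is nothing to compare --- you addressed a different statement.

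Treating your write-up as a proposed proof of Theorem~\ref{Theorem Main}, your overall architecture (ghost-sample symmetrisation, zero-mean sign process, bounded-difference subgaussian estimate, Theorem~\ref{Theorem generic chaining}, then desymmetrisation and bounded differences for the deviation term) coincides with the paper's. The genuine discrepancy is your choice of metric. You index the process by $\mathbf{w}_f=f(\mathbf{X}')-f(\mathbf{X})\in\mathbb{R}^n$ and claim the increments are subgaussian with proxy $c(L+M)^2\Vert\mathbf{w}_f-\mathbf{w}_{f'}\Vert^2$. The paper instead embeds $\tciFourier$ into $\mathbb{R}^{2n}$ via $f\mapsto(f(\mathbf{x}),f(\mathbf{x}'))$ and uses the larger pseudometric $d(f,g)^2=\sum_i\bigl[(f-g)(x_i)^2+(f-g)(x_i')^2\bigr]$. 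This is not a cosmetic choice: your smaller gauge does not control the mixed-partial remainder. Already for the toy quadratic $\Phi(s)=s_1s_2$ one computes that flipping $\sigma_1$ changes $Y_f-Y_g$ by a term containing $(g(x_1)-g(x_1'))\,[(f-g)(x_2)+(f-g)(x_2')]$, whose second factor can be of order one while $(\mathbf{w}_f-\mathbf{w}_g)_2=(f-g)(x_2')-(f-g)(x_2)$ vanishes. In the paper's proof this is exactly where the bound $\sum_{l\neq k}(A_l-B_l)^2\le d(f,g)^2$ in (\ref{Hessebound}) enters, and $A_l-B_l$ equals $(f-g)(x_l)$ or $(f-g)(x_l')$ individually, not their difference. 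So the step you correctly flagged as the main obstacle --- getting the second-order remainder subgaussian for the \emph{same} Euclidean gauge --- fails for the $n$-dimensional gauge $\Vert\mathbf{w}_f-\mathbf{w}_{f'}\Vert$ and requires the paper's $2n$-dimensional metric $d$ instead.
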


The constant $c$ which results from the proof is of course very large (in
the hundreds). Nevertheless, as remarked in (Talagrand 1987\nocite{Talagrand
1987}), if $X$ is a Gaussian process, then Theorem \ref{Theorem generic
chaining} reduces to Slepian's Lemma (Boucheron et al 2013\nocite{Boucheron
2013}), which inspires the tantalizing conjecture that the optimal $c$ could
be in the order of unity, or even equal to one.\bigskip 

We are now prepared for the proof of Theorem \ref{Theorem Main}.

\begin{proof}[Proof of Theorem \protect\ref{Theorem Main}]
We first prove (\ref{Main Inequality}), the proof of the generalization
bound (\ref{generalization bound}) then being an easy application of the
bounded difference inequality.

Let $Q$ be the left hand side of (\ref{Main Inequality}). Initially our
proof parallels the standard symmetrization argument: we pull the second
expectation outside the supremum 
\[
Q\leq \mathbb{E}_{XX^{\prime }}\sup_{f\in \tciFourier }\left[ \Phi \left(
\sum_{i}f\left( X_{i}\right) \mathbf{e}_{i}\right) -\Phi \left(
\sum_{i}f\left( X_{i}^{\prime }\right) \mathbf{e}_{i}\right) \right] . 
\]%
Since $X_{i}$ and $X_{i}^{\prime }$ are iid, the last quantity does not
change if we exchange $X_{i}$ and $X_{i}^{\prime }$ on an arbirary subset of
indices $i$. If $\mathbf{\sigma }\in \left\{ 0,1\right\} ^{n}$ is such that $%
\sigma _{i}$ is zero on this set and one on its complement, we obtain%
\begin{eqnarray*}
Q &\leq &\mathbb{E}_{XX^{\prime }}\sup_{f\in \tciFourier }\left[ \Phi \left(
\sum_{i}\left[ \sigma _{i}f\left( X_{i}\right) +\left( 1-\sigma _{i}\right)
f\left( X_{i}^{\prime }\right) \right] \mathbf{e}_{i}\right) \right. \\
&&\left. -\Phi \left( \sum_{i}\left[ \sigma _{i}f\left( X_{i}^{\prime
}\right) +\left( 1-\sigma _{i}\right) f\left( X_{i}\right) \right] \mathbf{e}%
_{i}\right) \right] \\
&=&\mathbb{E}_{XX^{\prime }}\mathbb{E}_{\mathbf{\sigma }}\sup_{f\in
\tciFourier }\left[ \Phi \left( \sum_{i}\left[ \sigma _{i}f\left(
X_{i}\right) +\left( 1-\sigma _{i}\right) f\left( X_{i}^{\prime }\right) %
\right] \mathbf{e}_{i}\right) \right. \\
&&\left. -\Phi \left( \sum_{i}\left[ \sigma _{i}f\left( X_{i}^{\prime
}\right) +\left( 1-\sigma _{i}\right) f\left( X_{i}\right) \right] \mathbf{e}%
_{i}\right) \right] .
\end{eqnarray*}%
In the last step we took the expectation over configurations $\mathbf{\sigma 
}$ chosen uniformly from $\left\{ 0,1\right\} ^{n}$. We now condition on the 
$X_{i}$ and $X_{i}^{\prime }$ (which we temporarily replace by lower case
letters) and consider the random process%
\begin{eqnarray*}
Y_{f}\left( \mathbf{\sigma }\right) &=&\Phi \left( \sum_{i}\left[ \sigma
_{i}f\left( x_{i}\right) +\left( 1-\sigma _{i}\right) f\left( x_{i}^{\prime
}\right) \right] \mathbf{e}_{i}\right) \\
&&-\Phi \left( \sum_{i}\left[ \sigma _{i}f\left( x_{i}^{\prime }\right)
+\left( 1-\sigma _{i}\right) f\left( x_{i}\right) \right] \mathbf{e}%
_{i}\right) .
\end{eqnarray*}%
Clearly $\mathbb{E}_{\mathbf{\sigma }}Y_{f}\left( \mathbf{\sigma }\right) =0$
for all $f\in \tciFourier $.

Now we want to apply Theorem \ref{Theorem generic chaining}. To this end we
define a (pseudo-) metric on $\tciFourier $ by%
\[
d\left( f,g\right) =\left( \sum_{i=1}^{n}\left( f\left( x_{i}\right)
-g\left( x_{i}\right) \right) ^{2}+\left( f\left( x_{i}^{\prime }\right)
-g\left( x_{i}^{\prime }\right) \right) ^{2}\right) ^{1/2},f,g\in
\tciFourier 
\]%
and seek to prove, for fixed $f,g\in \tciFourier $ and $s>0$ the inequality%
\begin{equation}
\Pr \left\{ Y_{f}-Y_{g}>s\right\} \leq \exp \left( \frac{-s^{2}}{8\left(
M^{2}+L^{2}\right) d\left( f,g\right) ^{2}}\right) .  \label{Tailbound}
\end{equation}%
Let $Z\left( \mathbf{\sigma }\right) =Y_{f}\left( \mathbf{\sigma }\right)
-Y_{g}\left( \mathbf{\sigma }\right) $. To prove (\ref{Tailbound}) we will
apply the bounded difference inequality, Theorem \ref{Theorem bounded
difference}, to $Z$. Fix a configuration $\mathbf{\sigma \in }\left\{
0,1\right\} ^{n}$. We define the vectors $A,B,C,D\in \left[ 0,1\right] ^{n}$
by%
\begin{eqnarray*}
A &=&\sum_{i}\left( \sigma _{i}f\left( x_{i}\right) +\left( 1-\sigma
_{i}\right) f\left( x_{i}^{\prime }\right) \right) \mathbf{e}_{i} \\
B &=&\sum_{i}\left( \sigma _{i}g\left( x_{i}\right) +\left( 1-\sigma
_{i}\right) g\left( x^{\prime }\right) \right) \mathbf{e}_{i} \\
C &=&\sum_{i}\left( \sigma _{i}f\left( x_{i}^{\prime }\right) +\left(
1-\sigma _{i}\right) f\left( x\right) \right) \mathbf{e}_{i} \\
D &=&\sum_{i}\left( \sigma _{i}g\left( x_{i}^{\prime }\right) +\left(
1-\sigma _{i}\right) g\left( x\right) \right) \mathbf{e}_{i}.
\end{eqnarray*}%
Then for any $k\in \left\{ 1,...,n\right\} $%
\begin{align*}
& Z_{k}\left( \mathbf{\sigma },1\right) -Z_{k}\left( \mathbf{\sigma }%
,0\right)  \\
& =\Phi _{k}\left( A,f\left( x_{k}\right) \right) -\Phi _{k}\left( B,g\left(
x_{k}\right) \right) +\Phi _{k}\left( D,g\left( x_{k}^{\prime }\right)
\right) -\Phi _{k}\left( C,f\left( x_{k}^{\prime }\right) \right)  \\
& -\Phi _{k}\left( A,f\left( x_{k}^{\prime }\right) \right) +\Phi _{k}\left(
B,g\left( x_{k}^{\prime }\right) \right) -\Phi _{k}\left( D,g\left(
x_{k}\right) \right) +\Phi _{k}\left( C,f\left( x_{k}\right) \right) 
\end{align*}%
Adding and subtracting the quantities $\Phi _{k}\left( B,f\left(
x_{k}\right) \right) $, $\Phi _{k}\left( B,f\left( x_{k}^{\prime }\right)
\right) $, $\Phi _{k}\left( C,g\left( x_{k}^{\prime }\right) \right) $ and $%
\Phi _{k}\left( C,g\left( x_{k}\right) \right) $, rearranging terms, and
using Jensens inequality (which is responsible for the factor $1/8$) we get%
\begin{eqnarray}
&&\frac{1}{8}\left( Z_{k}\left( \mathbf{\sigma },1\right) -Z_{k}\left( 
\mathbf{\sigma },0\right) \right) ^{2}  \label{decomposition} \\
&\leq &\left[ \Phi _{k}\left( B,f\left( x_{k}\right) \right) -\Phi
_{k}\left( B,g\left( x_{k}\right) \right) \right] ^{2}+\left[ \Phi
_{k}\left( B,g\left( x_{k}^{\prime }\right) \right) -\Phi _{k}\left(
B,f\left( x_{k}^{\prime }\right) \right) \right] ^{2}  \nonumber \\
&&+\left[ \Phi _{k}\left( C,f\left( x_{k}\right) \right) -\Phi _{k}\left(
C,g\left( x_{k}\right) \right) \right] ^{2}+\left[ \Phi _{k}\left( C,g\left(
x_{k}^{\prime }\right) \right) -\Phi _{k}\left( C,f\left( x_{k}^{\prime
}\right) \right) \right] ^{2}  \nonumber \\
&&+\left[ \Phi _{k}\left( A,f\left( x_{k}\right) \right) -\Phi _{k}\left(
A,f\left( x_{k}^{\prime }\right) \right) -\left( \Phi _{k}\left( B,f\left(
x_{k}\right) \right) -\Phi _{k}\left( B,f\left( x_{k}^{\prime }\right)
\right) \right) \right] ^{2}  \nonumber \\
&&+\left[ \Phi _{k}\left( D,g\left( x_{k}^{\prime }\right) \right) -\Phi
_{k}\left( D,g\left( x_{k}\right) \right) -\left( \Phi _{k}\left( C,g\left(
x_{k}^{\prime }\right) \right) -\Phi _{k}\left( C,g\left( x_{k}\right)
\right) \right) \right] ^{2}  \nonumber
\end{eqnarray}%
The first four terms are controlled with the coordinatewise Lipschitz
condition (\ref{Coordinatewise Lipschitz}), and their sum is bounded by 
\begin{equation}
2L^{2}\left[ \left( f\left( x_{k}\right) -g\left( x_{k}\right) \right)
^{2}+\left( f\left( x_{k}^{\prime }\right) -g\left( x_{k}^{\prime }\right)
\right) ^{2}\right] .  \label{coordinates of dfg}
\end{equation}%
The last two terms are bounded using the condition (\ref{Mixed deivative})
on the mixed partials. Consider the term 
\[
T:=\left[ \Phi _{k}\left( A,f\left( x_{k}\right) \right) -\Phi _{k}\left(
A,f\left( x_{k}^{\prime }\right) \right) \right] -\left[ \Phi _{k}\left(
B,f\left( x_{k}\right) \right) -\Phi _{k}\left( B,f\left( x_{k}^{\prime
}\right) \right) \right] .
\]%
Define a function $F:\left[ 0,1\right] ^{2}\rightarrow 
\mathbb{R}
$ by%
\[
F\left( t,s\right) =\Phi _{k}\left( tA+\left( 1-t\right) B,sf\left(
x_{k}\right) +\left( 1-s\right) f\left( x_{k}^{\prime }\right) \right) .
\]%
Then%
\[
T=\left[ F\left( 1,1\right) -F\left( 1,0\right) \right] -\left[ F\left(
0,1\right) -F\left( 0,0\right) \right] =\int_{0}^{1}\int_{0}^{1}\partial
_{12}F\left( t,s\right) dsdt,
\]%
so that $T^{2}\leq \sup_{\,s,t\in \left[ 0,1\right] }\left[ \partial
_{12}F\left( t,s\right) \right] ^{2}$. Now%
\begin{eqnarray*}
\partial _{12}F\left( t,s\right)  &=&\sum_{l:l\neq k}\left( \partial
_{lk}\Phi _{k}\right) \left( tA+\left( 1-t\right) B,sf\left( x_{k}\right)
+\left( 1-s\right) f\left( x_{k}^{\prime }\right) \right)  \\
&&\times \left( f\left( x_{k}\right) -f\left( x_{k}^{\prime }\right) \right)
\left( A_{l}-B_{l}\right) ,
\end{eqnarray*}%
and, using $\left\vert f\left( x_{k}\right) -f\left( x_{k}^{\prime }\right)
\right\vert \leq 1$, Cauchy Schwarz, and the definitions of $A$ and $B$,%
\begin{eqnarray}
\sup_{\,s,t\in \left[ 0,1\right] }\partial _{12}F\left( t,s\right) ^{2}
&\leq &\left\Vert \left[ \sum_{l:l\neq k}\left( \partial _{lk}\Phi
_{k}\right) \left( A_{l}-B_{l}\right) \right] ^{2}\right\Vert _{\infty }
\label{Hessebound} \\
&\leq &\left\Vert \sum_{l:l\neq k}\left( \partial _{lk}\Phi _{k}\right)
^{2}\right\Vert _{\infty }\sum_{l:l\neq k}\left( A_{l}-B_{l}\right) ^{2} 
\nonumber \\
&\leq &\left\Vert \sum_{l:l\neq k}\left( \partial _{lk}\Phi _{k}\right)
^{2}\right\Vert _{\infty }d\left( f,g\right) ^{2}.  \nonumber
\end{eqnarray}%
The last term in (\ref{decomposition}) is bounded in exactly the same way.
Summing these bounds and the bound in (\ref{coordinates of dfg}) over $k$ we
get%
\[
\sum_{k}\left( Z_{k}\left( \mathbf{\sigma },1\right) -Z_{k}\left( \mathbf{%
\sigma },0\right) \right) ^{2}\leq 16\left( M^{2}+L^{2}\right) d\left(
f,g\right) ^{2}.
\]%
The bounded difference inequality then gives us 
\[
\Pr \left\{ Z>s\right\} \leq \exp \left( \frac{-2^{2}}{8\left(
M^{2}+L^{2}\right) d\left( f,g\right) ^{2}}\right) ,
\]%
which proves the desired (\ref{Tailbound}).

Now let $H_{f}$ be the process defined by $H_{f}=Y_{f}/\sqrt{4\left(
M^{2}+L^{2}\right) }$. Then 
\[
\Pr \left\{ H_{f}-H_{g}>s\right\} \leq \exp \left( \frac{-s^{2}}{2d\left(
f,g\right) ^{2}}\right) .
\]%
Since $d$ is exactly the euclidean metric on $\tciFourier \left( \mathbf{x},%
\mathbf{x}^{\prime }\right) \subseteq 
\mathbb{R}
^{2n}$ we can apply Theorem \ref{Theorem generic chaining} to $H_{f}$ and
conclude that%
\begin{eqnarray*}
\mathbb{E}\sup_{f}Y_{f} &=&\sqrt{4\left( M^{2}+L^{2}\right) }\mathbb{E}%
\left( \sup_{f}H_{f}-H_{f_{0}}\right)  \\
&\leq &c\sqrt{M^{2}+L^{2}}\mathbb{E}\sup_{f}\sum_{i}\left( \gamma
_{i}f\left( x_{i}\right) +\gamma _{i}^{\prime }f\left( x_{i}^{\prime
}\right) \right) .
\end{eqnarray*}%
We now remove the conditioning and return to the $X_{i}$-variables, to get%
\begin{eqnarray*}
Q &\leq &\mathbb{E}_{XX^{\prime }}\mathbb{E}_{\sigma }\sup_{f}Y_{f}\leq c%
\sqrt{M^{2}+L^{2}}\mathbb{E}_{XX^{\prime }}\mathbb{E}_{\gamma \gamma
^{\prime }}\sup_{f}\sum_{i}\left( \gamma _{i}f\left( X_{i}\right) +\gamma
_{i}^{\prime }f\left( X_{i}^{\prime }\right) \right)  \\
&\leq &c\sqrt{M^{2}+L^{2}}\mathbb{E}\sup_{f}\sum_{i}\gamma _{i}f\left(
X_{i}\right) .
\end{eqnarray*}%
This completes the proof of the first part of the theorem, inequality (\ref%
{Main Inequality}), because $\sqrt{M^{2}+L^{2}}\leq M+L$.

For the second assertion let $\Psi \left( \mathbf{X}\right) =\sup_{f\in
\tciFourier }\left( \mathbb{E}\left[ \Phi \left( f\left( \mathbf{X}^{\prime
}\right) \right) \right] -\Phi \left( f\left( \mathbf{X}\right) \right)
\right) $ and write, just as in the introduction,%
\begin{equation}
\Psi \left( \mathbf{X}\right) =\mathbb{E}\left[ \Psi \left( \mathbf{X}%
^{\prime }\right) \right] +\left( \Psi \left( \mathbf{X}\right) -\mathbb{E}%
\left[ \Psi \left( \mathbf{X}^{\prime }\right) \right] \right) .
\label{Deco}
\end{equation}%
The first term has already been bounded in (\ref{Main Inequality}). For the
second term observe that, since the functions in $\tciFourier $ have range
in $\left[ 0,1\right] $, $\Psi \left( \mathbf{X}\right) $ changes at most by 
$L$ if any of its arguments is modified. The bounded difference inequality
gives 
\[
\Pr \left\{ \Psi \left( \mathbf{X}\right) -\mathbb{E}\left[ \Psi \left( 
\mathbf{X}^{\prime }\right) \right] >t\right\} \leq \exp \left( \frac{-2t^{2}%
}{nL^{2}}\right) . 
\]%
Equating to $\delta $ and solving for $t$ gives with probability at least $%
1-\delta $ that%
\[
\Psi \left( \mathbf{X}\right) -\mathbb{E}\left[ \Psi \left( \mathbf{X}%
^{\prime }\right) \right] \leq L\sqrt{\frac{n\ln \left( 1/\delta \right) }{2}%
}. 
\]%
Together with the decomposition (\ref{Deco}) and the bound on $\mathbb{E}%
\left[ \Psi \left( \mathbf{X}\right) \right] $ implied by (\ref{Main
Inequality}) this completes the proof of the generalization bound (\ref%
{generalization bound}).$\blacksquare $
\end{proof}

\end{document}